\newcommand*{\N}{\mathds{N}}
\newcommand*{\R}{\mathds{R}}
\newcommand{\Fo}{\mathbf F}
\newcommand{\Go}{\mathbf G}
\newcommand{\Ho}{\mathbf H}
\newcommand{\Lo}{\mathbf L}
\newcommand{\Ao}{\mathbf A}
\newcommand{\X}{\mathbb X}
\newcommand{\Y}{\mathbb Y}
\newcommand{\encoder}{\boldsymbol \Psi}
\newcommand{\decoder}{\boldsymbol \Phi}
\newcommand{\unet}{\mathcal N}
\DeclarePairedDelimiter{\abs}{\lvert}{\rvert}
\DeclarePairedDelimiter{\norm}{\lVert}{\rVert}
\DeclareMathOperator{\id}{id}
\DeclareMathOperator*{\argmin}{arg\,min}
\newtheorem{theorem}{Theorem}
\numberwithin{equation}{section}
\numberwithin{figure}{section}
\numberwithin{theorem}{section}
\author{Daniel Obmann}
\affil{Department of Mathematics\authorcr
University of Innsbruck\authorcr
Technikerstrasse 13, 6020 Innsbruck, Austria\authorcr
 {\tt daniel.obmann@uibk.ac.at@uibk.ac.at}
 }
\author{Johannes Schwab}
\affil{Department of Mathematics, University of Innsbruck\authorcr
Technikerstrasse 13, 6020 Innsbruck, Austria\authorcr
 {\tt Johannes.Schwab@uibk.ac.at@uibk.ac.at}
 }
\author{Markus Haltmeier}
\affil{Department of Mathematics, University of Innsbruck\authorcr
Technikerstrasse 13, 6020 Innsbruck, Austria\authorcr
 {\tt markus.haltmeier@uibk.ac.at}
 }
\title{Sparse synthesis regularization with deep\\ neural networks}
\date{August 6, 2019}
\begin{document}

\maketitle

\begin{abstract}
We  propose a  sparse reconstruction framework  for solving inverse problems. Opposed to existing sparse regularization
techniques  that are based  on frame representations,  we train
an encoder-decoder network by including an $\ell^1$-penalty.
We demonstrate that the  trained decoder network allows  sparse signal reconstruction  using  thresholded encoded coefficients without losing much quality
of the original  image.   Using  the sparse synthesis
prior, we  propose  minimizing the $\ell^1$-Tikhonov functional,
which is the sum of a data fitting term and  the  $\ell^1$-norm
of the synthesis coefficients, and show that it  provides a  regularization method.
\end{abstract}

\section{Introduction} \label{sec:introduction}

Various  applications in medical imaging, remote sensing and
elsewhere require solving an  inverse problems of the form
\begin{equation}\label{eq:ip}
y    = \Ao x + z \,,
\end{equation}
where $\Ao \colon \X \to \Y$ is a linear operator between
Hilbert spaces $\X$, $\Y$, and $z$ is the  data distortion.
Inverse problems are well analyzed and several established
approaches for its solution exist, including  filter-based methods or
variational regularization~\cite{EngHanNeu96,scherzer2009variational}.
In the very recent years, neural networks (NNs) and deep learning  appeared
as new paradigms for solving inverse problems,
and demonstrate  impressive  performance. Several approaches
 have been developed, including  two-step
 \cite{lee2017deep,jin2017deep,antholzer2018deep},  variational~\cite{kobler2017variational}, iterative
\cite{chang2017one,adler2017solving} and
regularizing networks~\cite{schwab2018deep}.

Standard  deep learning  approaches may lack data consistency
for unknowns very different from the  training images.
To address this issue, in \cite{li2018nett} a  deep  learning
approach has  been introduced where minimizers
\begin{equation} \label{eq:nett}
x_\alpha \in
\argmin_x \;
\norm{\Ao(x) - y}_\Y^2  +  \alpha \phi( \encoder ( x )  )
\end{equation}
are investigated.
Here $ \encoder  \colon  \X \rightarrow \Xi$
is a trained NN,  $\Xi$ a Hilbert space,
$\phi \colon \Xi \rightarrow [0,\infty]$, $\alpha >0$  a regularization parameter and $\Ao \colon \X \to \Y$.
The  resulting reconstruction approach  has been named  NETT (for network Tikhonov regularization), as it is a generalized  form of Tikhonov regularization using a  NN as  trained regularizer. For a related approach see \cite{lunz2018adversarial}.
In \cite{li2018nett} it is shown that under reasonable conditions, the  NETT  yields a convergent regularization method.

In  this paper, we introduce a novel deep
learning approach for inverse problems  that is somehow dual to
\eqref{eq:nett}.  We define approximate
solutions of \eqref{eq:ip} as $x_\mu  = \decoder(\xi_\mu)$,
where
\begin{equation}
\label{eq:snett}
\xi_\mu
\in \argmin_{\xi}  \;
\norm{ \Ao \decoder (\xi) - y}^2_\Y + \mu \phi(\xi)
\,.
\end{equation}
Here $  \decoder  \colon  \Xi \rightarrow \X$ is a
trained network, $\phi \colon \Xi \rightarrow [0,\infty]$ a penalty functional
and $\mu >0$ a regularization  parameter.
The NETT functional in   \eqref{eq:nett}  uses an analysis
approach where the analysis coefficients
  $\encoder ( x_\alpha  ) $ are  regular with
   regularity measured in smallness  of
  $\phi$.  Opposed to that,
  \eqref{eq:snett} assumes regularity of the synthesis
  coefficients $\xi_\mu$   and is therefore
  a synthesis version of NETT.

In  particular, we investigate the  case where
$ \Xi = \ell^2(\Lambda)$ for some index set $\Lambda$
and $\phi$ is a weighted $\ell^1$-norm used
as a sparsity prior. To construct an appropriate network,
we train a (modified) tight frame U-net \cite{han2018framing}
of the form $\decoder \circ \encoder$ using an $\ell^1$-penalty,
and take the decoder part as synthesis network.
We show  numerically  that the  decoder $\decoder$ allows to
reconstruct the signal   using   sparse  representations.
Note that we train  the network  independent of any measurement-operator.
As in \cite{chang2017one}  this allows one to solve any inverse problem
with the same (or similar) prior assumptions in the same way without
having to retrain the network. As the main theoretical result, in this paper
we show that \eqref{eq:snett} is a convergent regularization method.
Performing numerical reconstructions  and   comparing \eqref{eq:snett}  with  existing
approaches  for solving inverse  problems is  subject of future research.

\section{Preliminaries}
\label{sec:background}

In this section, we give   some
theoretical  background of   inverse
problems. Moreover,  we  describe the
tight frame U-net that will be used for the
trained regularizer.

\subsection{Regularization  of inverse problem}
\label{subsec:sparseregularization}

The characteristic  property  of  inverse problems
is its ill-posedness, which means  that   the solution
of $\Ao x = y$ is not unique or  highly
unstable with respect to data  perturbations.
In order to make the signal reconstruction process
stable and accurate,  regularization  methods have to be applied,
which use  a-priori knowledge about the true unknown
in order to construct estimates from data~\eqref{eq:ip}
that are close to the true solution.

Variational regularization is one of the most established
methods  for solving inverse problems.
These methods incorporate prior knowledge by  choosing
solutions with  small value of a regularization functional.
In the synthesis approach,  this amounts  solving
 \eqref{eq:snett},   where $\decoder \colon \Xi \to \X$ is a prescribed synthesis operator.  The minimizers  of \eqref{eq:snett}  are  designed to approximate $\phi$-minimizing  solutions  of the equation
 $\Ao \decoder (\xi) = y$,  defined by
\begin{equation} \label{eq:phinorm}
\left\{
\begin{aligned}
&\text{min}  &&\phi(\xi)\\
&\text{s.t.}  &&\Ao \decoder (\xi) = y  \,.
\end{aligned}
\right.\end{equation}
A frequently chosen regularizer is a weighted  $\ell^1$-norm,
which has been proven
to  be useful for solving compressed sensing  and
other inverse problems \cite{candes2008introduction, grasmair2011necessary,haltmeier2013stable}.
This is the form for the regularizer we
will be using  in this paper.

The synthesis approach is commonly used
with $\decoder (\xi)  = \sum_{\lambda  \in \Lambda}  \xi_\lambda
u_\lambda$ being  the synthesis operator of  a
frame  $(u_\lambda)_{\lambda}$ of $\X$, such as a
wavelet or curvelet frame
or a trained dictionary \cite{DauDefDem04,CanDon02,aharon2006ksvd,gribonval2010dictionary}.
In this case, $\Ao \decoder$ is linear, which allows the
application of  the standard sparse recovery theory \cite{scherzer2009variational,grasmair2011necessary}.
Opposed to that, in this paper we take the  synthesis operator
as  a  trained network  in which case $\Ao \decoder$ is non-linear.
In particular, we  take the synthesis operator as decoder part
of an encoder-decoder network that is
trained to satisfy   $\decoder (\encoder (x)) \simeq  x$.
As encoder-decoder network we  use the tight frame U-net  \cite{han2018framing} which is a modification of the U-net \cite{ronneberger2015u}
with improved  reproducing capabilities.


\subsection{Tight frame U-net} \label{subsec:tfun}

We consider the case of 2D images and denote by
 $ \X_0 = \R^{n_0  \times c_0}$
the space at  the coarsest resolution of the signal with size  $n_0$ and
$c_0$ channels.
The tight frame U-net  uses a hierarchical
multi-scale  representation
defined recursively by
\begin{equation} \label{eq:tfunet}
\unet_{\ell+1}   =
\Go_{\ell} \circ \left(
\begin{bmatrix}
\Ho_h \circ \Ho_h^\intercal  \\
\Ho_d \circ \Ho_d^\intercal  \\
\Ho_v \circ \Ho_v^\intercal  \\
\Lo  \circ  \unet_\ell \circ  \Lo^\intercal
\end{bmatrix}   \circ \Fo_\ell,  \id \right)   \,,
\end{equation}
for $\ell \in \N$ and with  $\unet_0 = \id$.
Here $\Fo_\ell \colon \R^{n_\ell \times c_\ell} \to \R^{n_\ell \times d_\ell}$ and $\Go_\ell \colon \R^{n_\ell \times d_\ell} \to \colon \R^{n_\ell \times c_\ell}$
are convolutional layers followed by a non-linearity and $\id$ is the identity used for the bypass-connection.
 $\Ho_h, \Ho_v, \Ho_d$ are horizontal, vertical and diagonal
 high-pass filters and $\Lo$ is a low-pass filter
 such that the  tight frame property
\begin{equation}\label{eq:frame}
 \Ho_h \Ho_h^\intercal + \Ho_v \Ho_v^\intercal +\Ho_d \Ho_d^\intercal + \Lo  \Lo^\intercal
= c \cdot \id
\end{equation}
is satisfied for some $c > 0$. We define the  filters by
applying the tensor products  $\text{HH}^\intercal$, $\text{HL}^\intercal$,
$\text{LH}^\intercal$ and  $\text{LL}^\intercal$ of the Haar wavelet low-pass
$\text{L} = 2^{-1/2} \,  [1 , 1]^\intercal$  and
high-pass $\text{H} = 2^{-1/2} \, [1 , -1]^\intercal$
filters  separately in each channel.

%
%
%

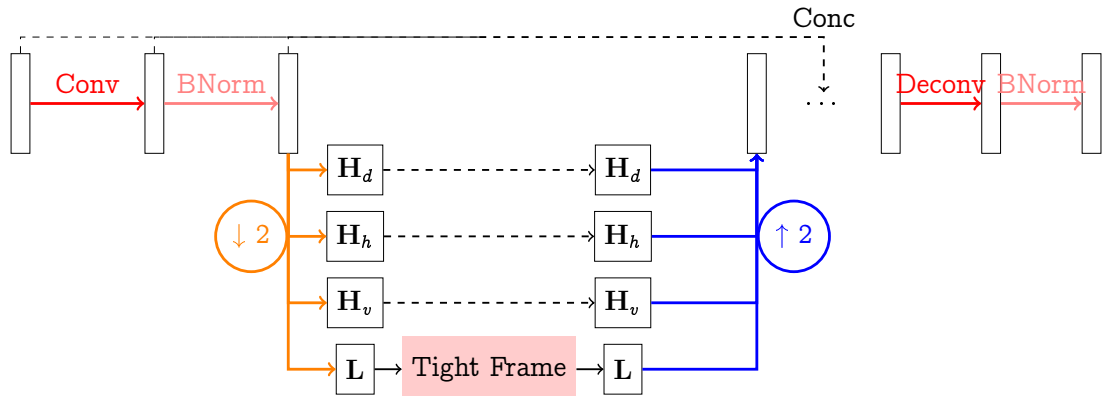
\begin{figure}[h]
\begin{center}
\resizebox{\textwidth}{!}{
\begin{tikzpicture}
\node (INP) at (0,0) [rectangle, minimum height=1.5cm,minimum width=0.25cm, draw=black] {};

\node (CONV) at (2,0) [rectangle, minimum height=1.5cm,minimum width=0.25cm, draw=black] {};

\node (BNORM) at (4,0) [rectangle, minimum height=1.5cm,minimum width=0.25cm, draw=black] {};

\draw[->, red, very thick] (INP) to node[above] {Conv} (CONV);
\draw[->, red!50, very thick] (CONV) to node[above] {BNorm} (BNORM);

\node (DHH) at (5,-1) [rectangle, minimum height=0.75cm,minimum width=0.5cm, draw=black] {$\Ho_d$};
\node (DHL) at (5,-2) [rectangle, minimum height=0.75cm,minimum width=0.5cm, draw=black] {$\Ho_h$};
\node (DLH) at (5,-3) [rectangle, minimum height=0.75cm,minimum width=0.5cm, draw=black] {$\Ho_v$};
\node (DLL) at (5,-4) [rectangle, minimum height=0.75cm,minimum width=0.5cm, draw=black] {$\Lo$};

\draw[->, orange, very thick] (BNORM.south) |- (DHH.west);
\draw[->, orange, very thick] (BNORM.south) |- node[left, circle, draw] {$\downarrow$ 2} (DHL.west);
\draw[->, orange, very thick] (BNORM.south) |- (DLH.west);
\draw[->, orange, very thick] (BNORM.south) |- (DLL.west);

\node (REC) at (7,-4) [rectangle, minimum height=1cm,minimum width=1cm, fill=red!20] {Tight Frame};

\draw[->, black, thick] (DLL) to (REC);

\node (UHH) at (9,-1) [rectangle, minimum height=0.75cm,minimum width=0.5cm, draw=black] {$\Ho_d$};
\node (UHL) at (9,-2) [rectangle, minimum height=0.75cm,minimum width=0.5cm, draw=black] {$\Ho_h$};
\node (ULH) at (9,-3) [rectangle, minimum height=0.75cm,minimum width=0.5cm, draw=black] {$\Ho_v$};
\node (ULL) at (9,-4) [rectangle, minimum height=0.75cm,minimum width=0.5cm, draw=black] {$\Lo$};

\draw[->, black, thick] (REC) to (ULL);
\draw[->, black, thick, dashed] (DHH) to (UHH);
\draw[->, black, thick, dashed] (DHL) to (UHL);
\draw[->, black, thick, dashed] (DLH) to (ULH);

\node (UP1) at (11,0) [rectangle, minimum height=1.5cm,minimum width=0.25cm, draw=black] {};
\node (UP2) at (12,0) {$\dots$};
\node (UP3) at (13,0) [rectangle, minimum height=1.5cm,minimum width=0.25cm, draw=black] {};

\node (UPDECONV) at (14.5,0) [rectangle, minimum height=1.5cm,minimum width=0.25cm, draw=black] {};
\node (UPBATCH) at (16,0) [rectangle, minimum height=1.5cm,minimum width=0.25cm, draw=black] {};

\draw[->, blue, very thick] (UHH.east) -| (UP1.south);
\draw[->, blue, very thick] (ULL.east) -| (UP1.south);
\draw[->, blue, very thick] (UHL.east) -| node[right, circle, draw] {$\uparrow$ 2} (UP1.south);
\draw[->, blue, very thick] (ULH.east) -| (UP1.south);

\node (HELP) at (7,1) {};

\draw[-, black, dashed] (INP.north) |- (HELP.west);
\draw[-, black, dashed] (CONV.north) |- (HELP.west);
\draw[-, black, dashed] (BNORM.north) |- (HELP.west);
\draw[->, black, thick, dashed] (HELP.west) -| node[above] {Conc} (UP2.north);

\draw[->, red, very thick] (UP3) to node[above] {Deconv} (UPDECONV);
\draw[->, red!50, very thick] (UPDECONV) to node[above] {BNorm} (UPBATCH);

\end{tikzpicture}
}
\end{center}
\caption{\textbf{Tight frame U-net  architecture.}
We start by convolving the input and applying  batch normalization. Then each channel is  filtered using the wavelet filters, and the $\Lo$ output is  recursively used as  input for the next layer.
After the downsampling to the coarsest resolution, we  upsample
by applying the transposed wavelet filters. Next we concatenate the layers
and use deconvolution and batch normalization to obtain the output.}
\label{fig:architecture}
\end{figure}

The architecture of the tight frame U-net  is shown in
Figure~\ref{fig:architecture}.
It uses standard learned  convolution, batch-normalization and the
 fixed wavelet   filters $\Ho_h, \Ho_v, \Ho_d, \Lo $
for downsampling and  upsampling. To improve  flexibility
of the network we include an additional learned deconvolution
layer after the upsampling.   After every convolutional layer the
ReLU activation   function is applied.
Similarly, we define a tight frame U-net  without bypass-connection,
\begin{equation} \label{eq:tfunet-nb}
\unet_{\ell+1}   =
\Go_{\ell} \circ \left(
\begin{bmatrix}
\Ho_h \circ \Ho_h^\intercal  \\
\Ho_d \circ \Ho_d^\intercal  \\
\Ho_v \circ \Ho_v^\intercal  \\
\Lo  \circ  \unet_\ell \circ  \Lo^\intercal
\end{bmatrix}   \circ \Fo_\ell \right)   \,,
\end{equation}
for $\ell \in \N$ and with  $\unet_0 = \id$.
Here $\Fo_\ell \colon \R^{n_\ell \times c_\ell} \to \R^{n_\ell \times d_\ell}$, 
$\Go_\ell \colon \R^{n_\ell \times d_\ell} \to \colon \R^{n_\ell \times c_\ell}$ are convolutional layers 
followed by a nonlinearity, and  $\Ho_h, \Ho_v, \Ho_d$, $\Lo$ are  the wavelet filters as described above.
In the rest of the paper we will refer to the network defined in \eqref{eq:tfunet} as tight frame U-net with bypass-connection, and the network defined in \eqref{eq:tfunet-nb} as tight frame U-net without
bypass-connection.

The tight frame property \eqref{eq:frame} allows the networks \eqref{eq:tfunet} and
\eqref{eq:tfunet-nb}  to both have the perfect recovery condition which means that
filters $ \Fo_\ell, \Go_\ell$ can  be  chosen  such
that  any   signal $x \in \X$  can be perfectly recovered
from its frame coefficients if they are given in all layers \cite{han2018framing}.
In the following we will refer to the results after convolving an image $x_\ell \in  \X_\ell = \R^{n_\ell \times c_\ell}$ with the fixed wavelet filters as filtered version  of $x_\ell$.

\section{Nonlinear sparse synthesis regularization}
\label{sec:main}

To solve the inverse problem \eqref{eq:ip}, we  use the sparse
synthesis  NETT which considers minimizers of
 \begin{equation}
\label{eq:snett2}
\mathcal{S}_{\mu, y}( \xi )
\triangleq \norm{ \Ao \decoder (\xi) - y}^2_\Y +
\mu \sum_{\lambda \in \Lambda } w_\lambda \abs{\xi_\lambda}
\,.
\end{equation}
Here $\decoder \colon \ell^2(\Lambda) \to \X$
is the synthesis operator, $\Lambda$ an  index set
and  $w_\lambda$ are positive  parameters.

\subsection{Theoretical analysis}

The sparse synthesis NETT   can be seen as  weighted
$\ell^1$-regularization for the coefficient  inverse problem
$\Ao \decoder (\xi) =y$. For its theoretical   analysis  we
require the following
\begin{enumerate}[label=(A\arabic*),leftmargin=3em]
\item \label{a1} $\Ao \colon \X \to \Y$ is bounded linear;
\item \label{a2} $\decoder \colon \ell^2(\Lambda) \to \X$
is weakly continuous;
\item \label{a3} $w_{\rm min} \triangleq \inf \{w_\lambda \mid
\lambda \in \Lambda \} >0$.
\end{enumerate}
We then have the following result:

 \begin{theorem}[Well-posedness]
Under assumptions \ref{a1}-\ref{a3} the following
holds:
\begin{itemize}
\item \textsc{Existence:}
For all $y \in Y$, $\mu >0$,    the functional in  \eqref{eq:snett2} has a minimizer
\item  \textsc{Stability:}
Suppose
$y_k \to y$ and $\xi_k \in \argmin \mathcal{S}_{\mu, y_k} $.
Then weak accumulation points of $(\xi_k)_{k \in \N}$ exist
and are minimizers of $\mathcal{S}_{\mu, y}$.
\end{itemize}
\end{theorem}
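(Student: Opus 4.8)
The plan is to establish both statements by the direct method of the calculus of variations, exploiting that the weighted $\ell^1$-term is coercive on $\ell^2(\Lambda)$ and that every summand of $\mathcal{S}_{\mu,y}$ is weakly lower semicontinuous.

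For \textsc{Existence} I would start from a minimizing sequence $(\xi_n)_n$ for $\mathcal{S}_{\mu,y}$; since $\mathcal{S}_{\mu,y}(0) = \norm{\Ao\decoder(0)-y}_\Y^2 < \infty$ the infimum is finite and the functional values are eventually bounded, hence so is the penalty. By \ref{a3} this gives $\mu\,w_{\rm min}\norm{\xi_n}_1 \le \mu\sum_\lambda w_\lambda\abs{\xi_{n,\lambda}} \le C$, and using $\norm{\xi_n}_2 \le \norm{\xi_n}_1$ on sequence spaces the sequence is bounded in $\ell^2(\Lambda)$, so after passing to a subsequence $\xi_n \rightharpoonup \xi^\star$ weakly. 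It then remains to verify weak lower semicontinuity of the two terms. For the data term, \ref{a2} yields $\decoder(\xi_n)\rightharpoonup\decoder(\xi^\star)$, and since $\Ao$ is bounded linear by \ref{a1} it is weakly continuous, so $\Ao\decoder(\xi_n)-y \rightharpoonup \Ao\decoder(\xi^\star)-y$; weak lower semicontinuity of $\norm{\cdot}_\Y^2$ then gives the estimate. For the penalty, weak convergence in $\ell^2(\Lambda)$ forces componentwise convergence $\xi_{n,\lambda}\to\xi^\star_\lambda$, and Fatou's lemma for the counting measure yields $\sum_\lambda w_\lambda\abs{\xi^\star_\lambda} \le \liminf_n \sum_\lambda w_\lambda\abs{\xi_{n,\lambda}}$. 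Adding the two bounds (using superadditivity of $\liminf$) shows $\mathcal{S}_{\mu,y}(\xi^\star) \le \liminf_n \mathcal{S}_{\mu,y}(\xi_n) = \inf\mathcal{S}_{\mu,y}$, so $\xi^\star$ is a minimizer.

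For \textsc{Stability} the first task is a uniform bound on $(\xi_k)_k$. Comparing each minimizer against the fixed competitor $0$ gives $\mathcal{S}_{\mu,y_k}(\xi_k) \le \mathcal{S}_{\mu,y_k}(0) = \norm{\Ao\decoder(0)-y_k}_\Y^2$, and since $y_k\to y$ the right-hand side is bounded; exactly as above this bounds $\norm{\xi_k}_1$, hence $\norm{\xi_k}_2$, so weak accumulation points exist. Let $\xi_{k_j}\rightharpoonup\xi^\star$ be one of them and fix an arbitrary competitor $\xi\in\ell^2(\Lambda)$. The key point is that $\Ao\decoder(\xi_{k_j})\rightharpoonup\Ao\decoder(\xi^\star)$ weakly while $y_{k_j}\to y$ strongly, so the difference converges weakly and $\norm{\cdot}_\Y^2$ is again weakly lower semicontinuous; combined with the Fatou bound on the penalty this gives $\mathcal{S}_{\mu,y}(\xi^\star) \le \liminf_j \mathcal{S}_{\mu,y_{k_j}}(\xi_{k_j})$. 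Since $\xi_{k_j}$ minimizes $\mathcal{S}_{\mu,y_{k_j}}$ and $\mathcal{S}_{\mu,y_{k_j}}(\xi)\to\mathcal{S}_{\mu,y}(\xi)$ by continuity of $y\mapsto\norm{\Ao\decoder(\xi)-y}_\Y^2$, we conclude $\mathcal{S}_{\mu,y}(\xi^\star) \le \mathcal{S}_{\mu,y}(\xi)$, so $\xi^\star$ minimizes $\mathcal{S}_{\mu,y}$.

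The routine parts (boundedness, weak compactness, continuity in $y$) are standard. The step deserving most care, and which I expect to be the main obstacle, is the weak lower semicontinuity of the \emph{nonlinear} composite data term: this is precisely where assumption \ref{a2} is indispensable, since without weak continuity of $\decoder$ one cannot transfer weak convergence of the coefficients through the decoder. A secondary subtlety in the stability part is that argument and data vary simultaneously, handled by the ``weak plus strong implies weak'' convergence of $\Ao\decoder(\xi_{k_j})-y_{k_j}$; and one must obtain $\ell^1$-lower semicontinuity from componentwise convergence via Fatou rather than from any norm continuity, since the $\ell^1$-norm is not weakly continuous on $\ell^2(\Lambda)$.
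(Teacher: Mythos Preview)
Your argument is correct and follows the standard direct-method route (coercivity from \ref{a3}, weak lower semicontinuity of the data term via \ref{a1}--\ref{a2}, and Fatou for the penalty), which is exactly the content underlying the paper's proof. The paper itself does not spell this out but simply observes that $\Ao\decoder$ is weakly continuous by \ref{a1}--\ref{a2} and then invokes \cite[Theorems~3.48--3.49]{scherzer2009variational} as a black box; your write-up is essentially an in-place verification of that cited result in the present setting.
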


\begin{proof}
According to \ref{a1}, \ref{a2}, the operator  $\Ao \decoder$  is weakly
continuous. Therefore, the results are a direct  consequence of  \cite[Theorem 3.48]{scherzer2009variational}.
\end{proof}

From    \cite[Theorem 3.48, Theorem 3.49]{scherzer2009variational}
we  can further deduce  convergence (as the noise level  goes to zero)
of the sparse synthesis NETT. Later we take $\decoder$ as
decoder part of a tight frame U-net trained as  an
auto-encoder, which we  expect to be weakly continuous and Lipschitz continuous. In this case, we have  stability and convergence  for
the actual reconstruction $\decoder (\xi_\mu)$.

\subsection{A trained sparse regularizer} \label{subsec:training}

Using a similar architecture to the one suggested in \cite{han2018framing}, we train a model for sparse regularization. To enforce sparsity in the encoded domain we will use a combination of mean-squared-error and an $\ell^1$-penalty of the filtered coefficients as  loss-function for training purposes. The idea is   to enforce the sparsity in the high-pass filtered images.   To achieve this, we will regularize these images in the encoded domain using a regularization parameter depending on the layer.

We write  the tight frame U-net defined by
\eqref{eq:tfunet} in the form
$\decoder_{\eta} \circ \encoder_{\theta}$
where   $\encoder_{\theta}$ is the encoder and  $\decoder_{\eta}$ the decoder part. Moreover, we  denote by $\encoder_{\theta; a}^\ell(x)$ for
$a \in \{ h, v, d\}$ the high-pass filter coefficients of
$x \in \X$ in the $\ell$th layer. Given training data
$x_1, \dots, x_N$,  the  loss-function used for network
training is taken  as
\begin{equation} \label{eq:train}
E(\theta, \eta) = \frac{1}{N} \sum_{i=1}^N
 \norm{\decoder_{\eta} \circ \encoder_{\theta} (x_i) - x_i }_2^2 
 + \frac{\mu}{N} \sum_{i=1}^N  \sum_{\ell \in \N}   \sum_{a \in \{ h, v, d\}} w_\ell
\norm{\encoder_{\theta; a}^\ell(x_i)} _1 \,.
\end{equation}
The first term of the loss-function is supposed to enforce
the network to reproduce the training images.
Following the sparse regularization strategy, the second term
forces the  network to learn convolutions  such that high-pass
filtered coefficients are sparse.

\section{Numerical experiments}
\label{subsec:results}

The above sparse encoding  strategy  has been tested
with the two network architectures described in \eqref{eq:tfunet} and \eqref{eq:tfunet-nb}. Both networks are tested for their reconstruction capabilities when setting parts of the frame coefficients to zero. Actual application to the solution of tomographic inverse
problems is subject of future research.

\begin{figure}[htb!]
\begin{subfigure}{.5\columnwidth}
\includegraphics[width=1.2\columnwidth, trim={3.5cm 1cm 1cm 1cm},clip]{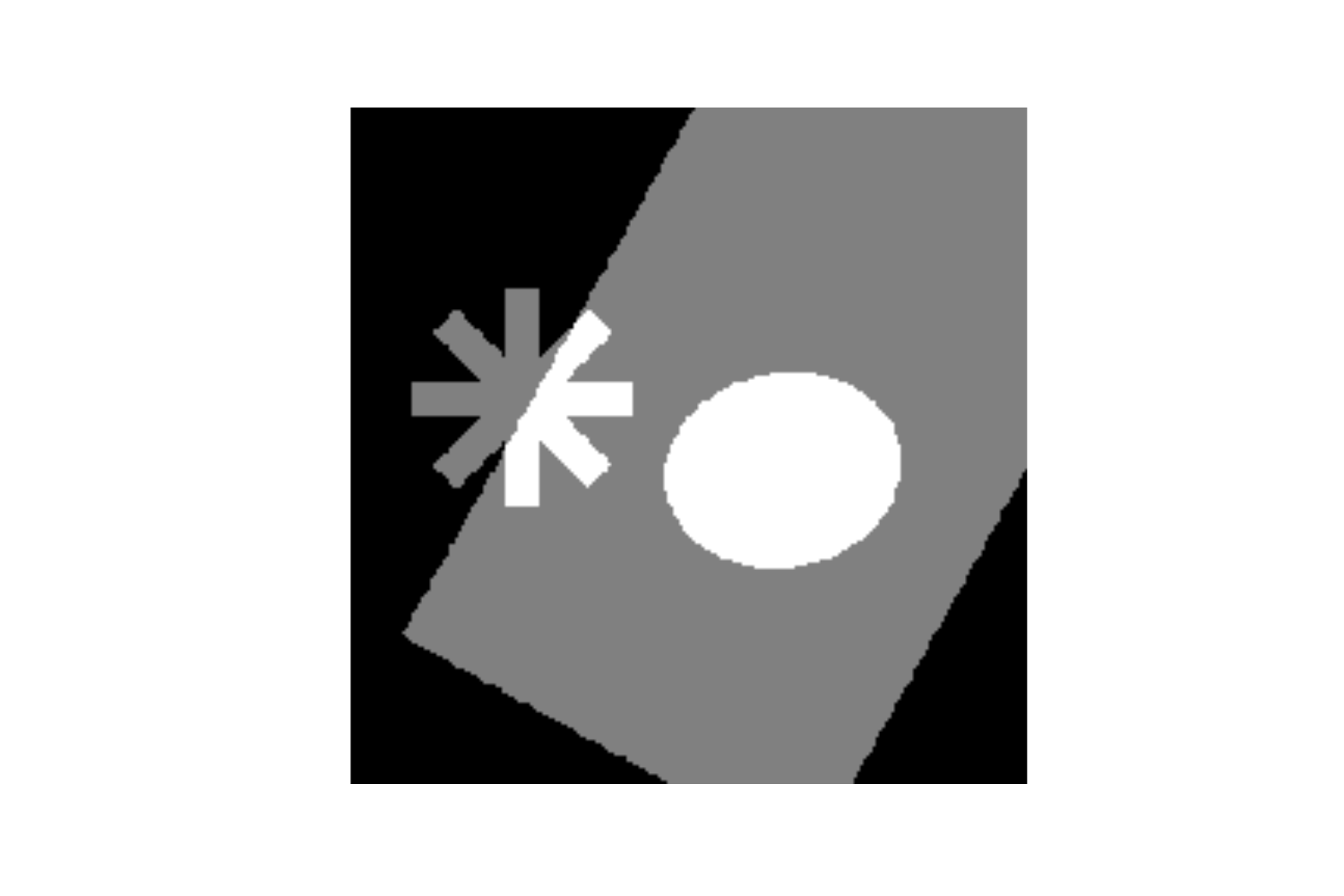}
\end{subfigure}%
\begin{subfigure}{.5\columnwidth}
\includegraphics[width=1.2\columnwidth, trim={3.5cm 1cm 1cm 1cm},clip]{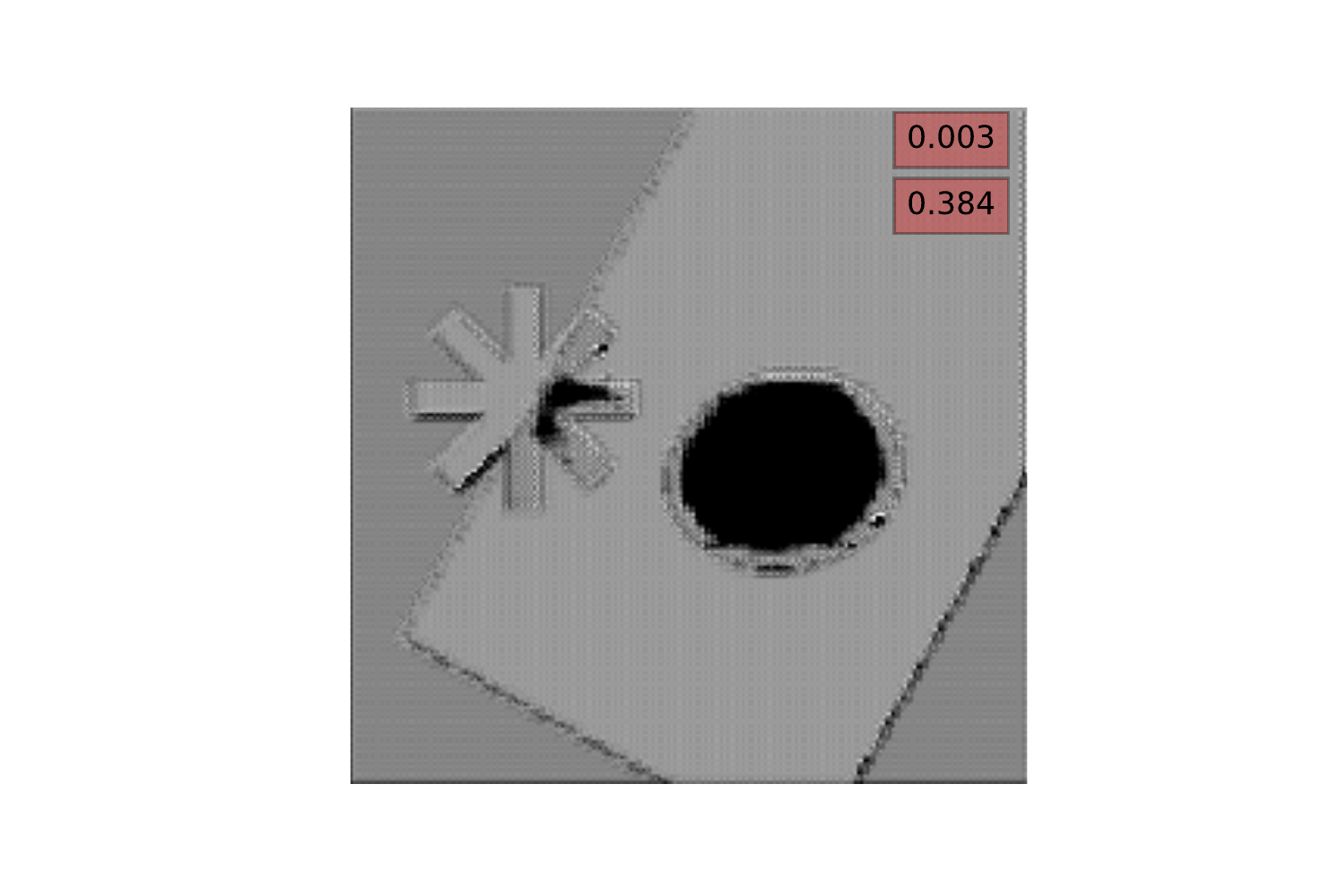}
\end{subfigure}

\caption{\textbf{Test phantom and influence  of the bypass connection.}
Top left: original image. Top right: reconstructed image using the network with
bypass-connection and setting  the bypass-coefficients to 0.
The first number depicted in the right image is the image distance described
in~\eqref{subsec:results} and the second one is the SSIM. }
\label{fig:phantom}
\end{figure}

\subsection{Implementation details}

For the  numerical experiments, we generated  $256 \times 256$ grayscale images which contain an ellipse, a rectangle and a star-like shape.  Each of the shapes parameter has been chosen randomly. The training dataset  consists of 1500 and the validation dataset of  500 such images.   One of the phantoms from the training set is shown in Figure~\ref{fig:phantom} (top left).  The top right image shows the  reconstruction using the tight frame U-net
trained with the bypass-connection  after setting the  bypass-coefficients to zero.
The large difference between these two  images shows that the bypass-connection
significantly contributes  to the image representation and reconstruction. Since the wavelet filters have not been applied
to the bypass-connection, one  cannot  expect sparsity for this part. This is actually the reason why we
expect  the tight frame U-net without bypass-connection to allow much sparser approximation
than the  tight frame U-net with bypass-connection. This  conjecture is supported
by the numerical results presented below.

Each of the  networks has 3 downsampling- and upsampling-layers and starts with 8 channels for the first convolution. The number of channels is then doubled in each consequent layer. For minimizing the loss-function $E(\theta, \eta)$ w.r.t $\theta$ and $\eta$ we use the Adam \cite{kingma2014adam} algorithm with the suggested parameters and train each network for 60 epochs. For the experiments we chose the regularization parameters $\mu = 10^{-9.5} \cdot N$ where $N$ is the number of trainings-samples and $w_{\ell} = 2^{-\ell}$.  The training was done using an Intel Xeon CPU E331225 @$\SI{3.10}{\giga\hertz}$
 processor and  $\SI{16}{\giga\byte}$ RAM. Each epoch (including the evaluation on the validation set) took about
$\SI{30}{\min}$ for the tight frame U-net with bypass-connection and about $\SI{20}{\min}$ minutes for the tight frame U-net without bypass-connection. This results in a training-time of  $\SI{30}{\hour}$ and $\SI{20}{\hour}$, respectively. Note that the training time could be reduced significantly  by  using  GPUs for less than  \EUR{1000} instead of  the CPU.

\begin{figure}[htb!]
\begin{subfigure}{.5\columnwidth}
\includegraphics[width=1.2\columnwidth, trim={3.5cm 1cm 1cm 1cm},clip]{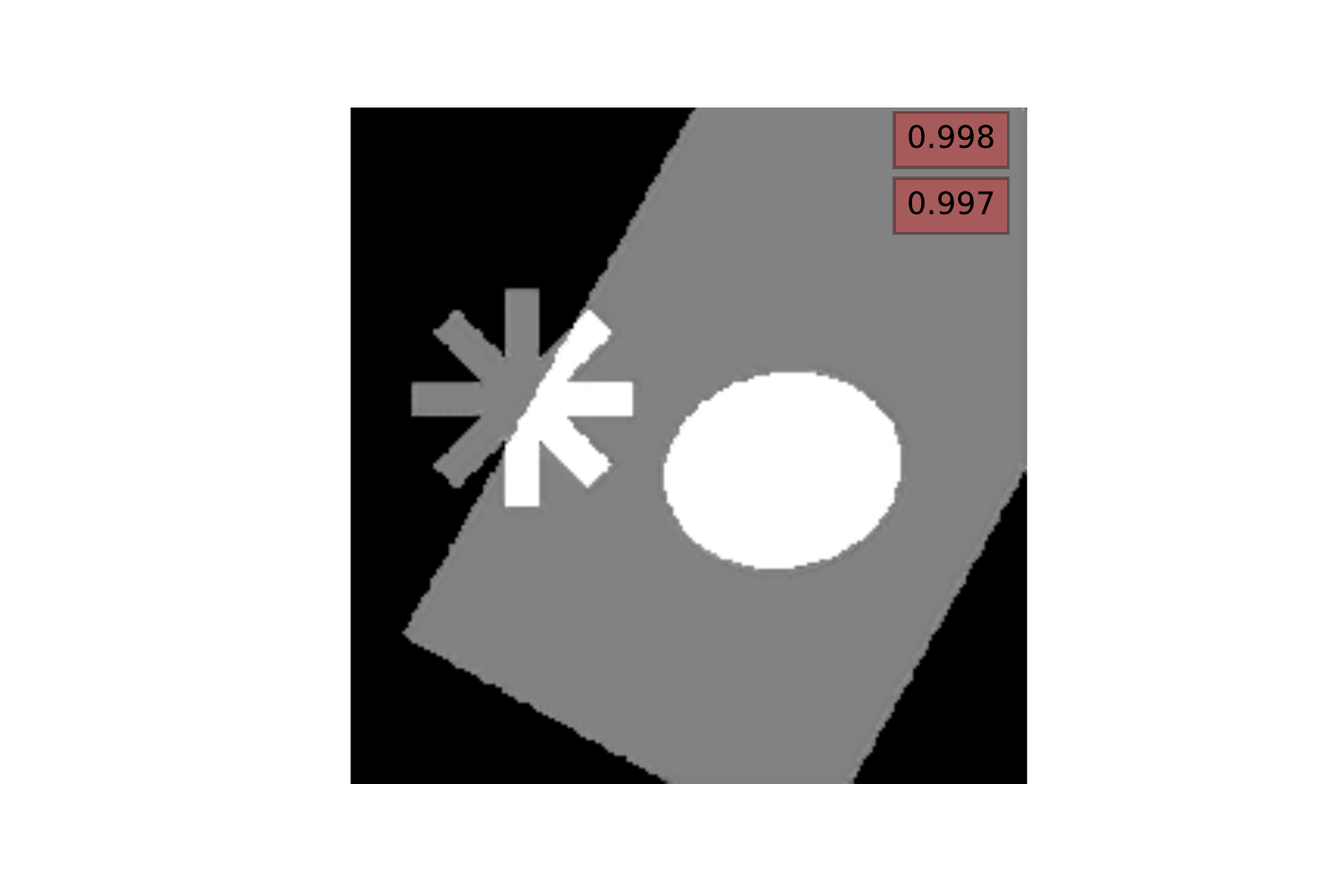}
\end{subfigure}%
\begin{subfigure}{.5\columnwidth}
\includegraphics[width=1.2\columnwidth, trim={3.5cm 1cm 1cm 1cm},clip]{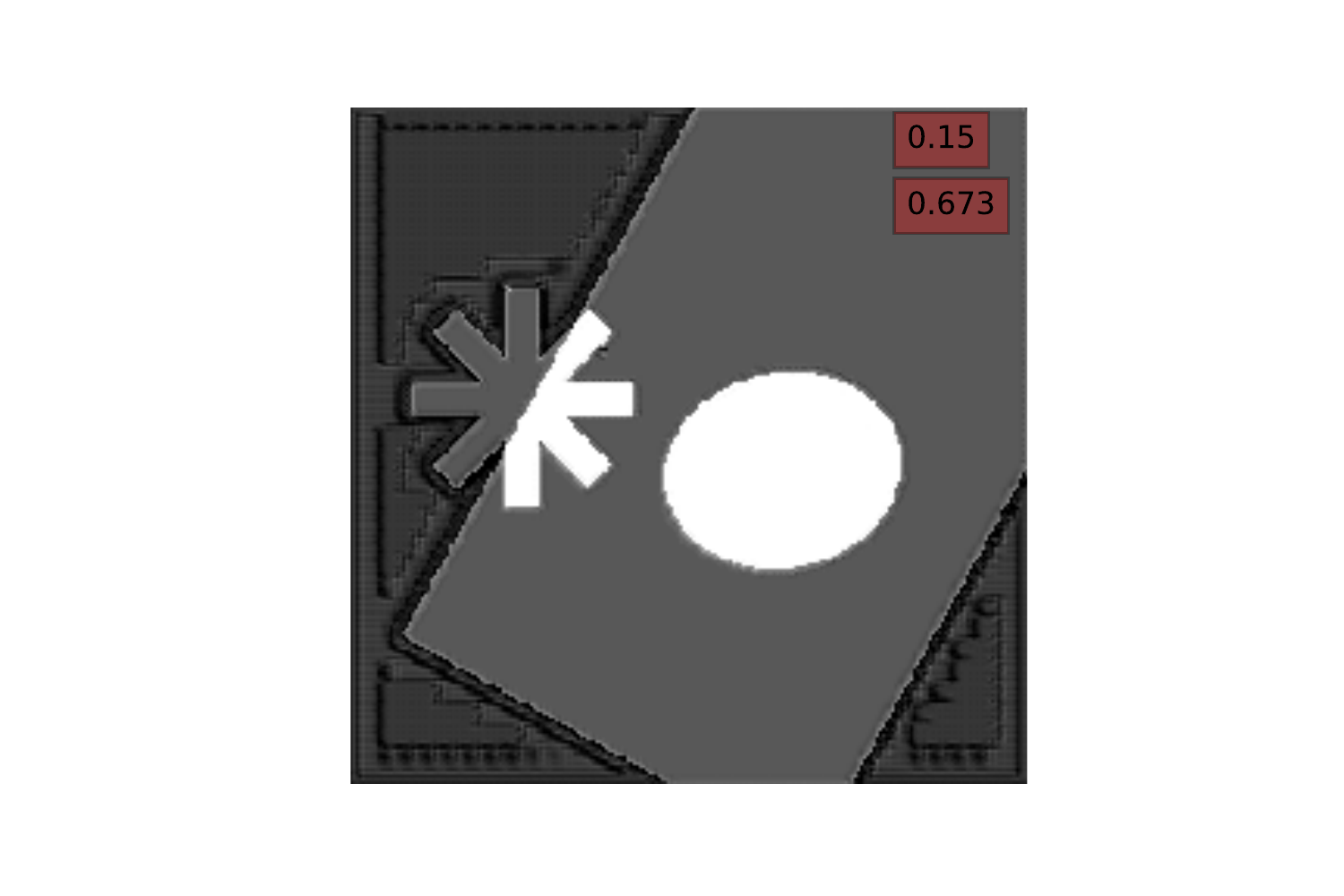}
\end{subfigure}\\

\begin{subfigure}{.5\columnwidth}
\includegraphics[width=1.2\columnwidth, trim={3.5cm 1cm 1cm 1cm},clip]{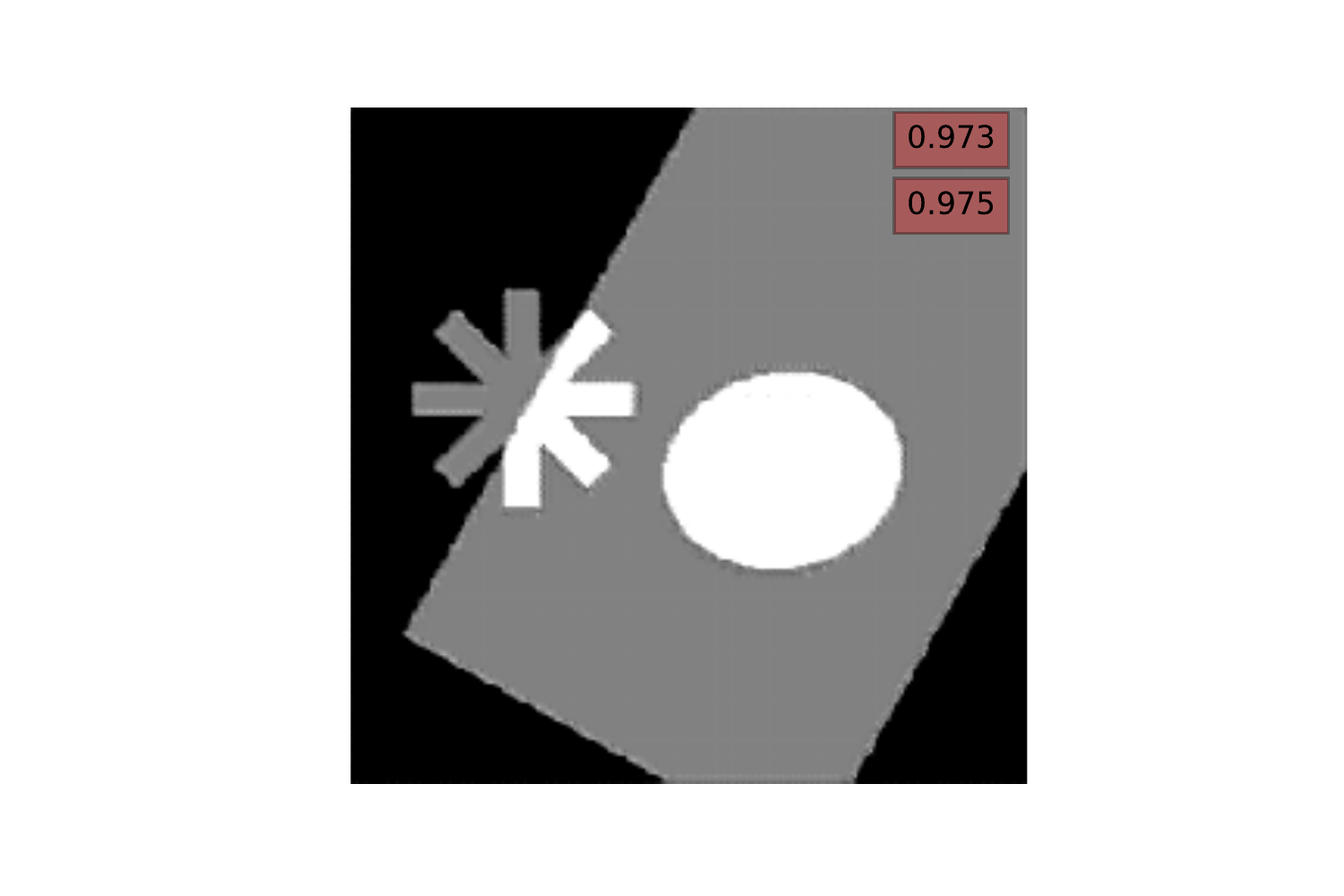}
\end{subfigure}%
\begin{subfigure}{.5\columnwidth}
\includegraphics[width=1.2\columnwidth, trim={3.5cm 1cm 1cm 1cm},clip]{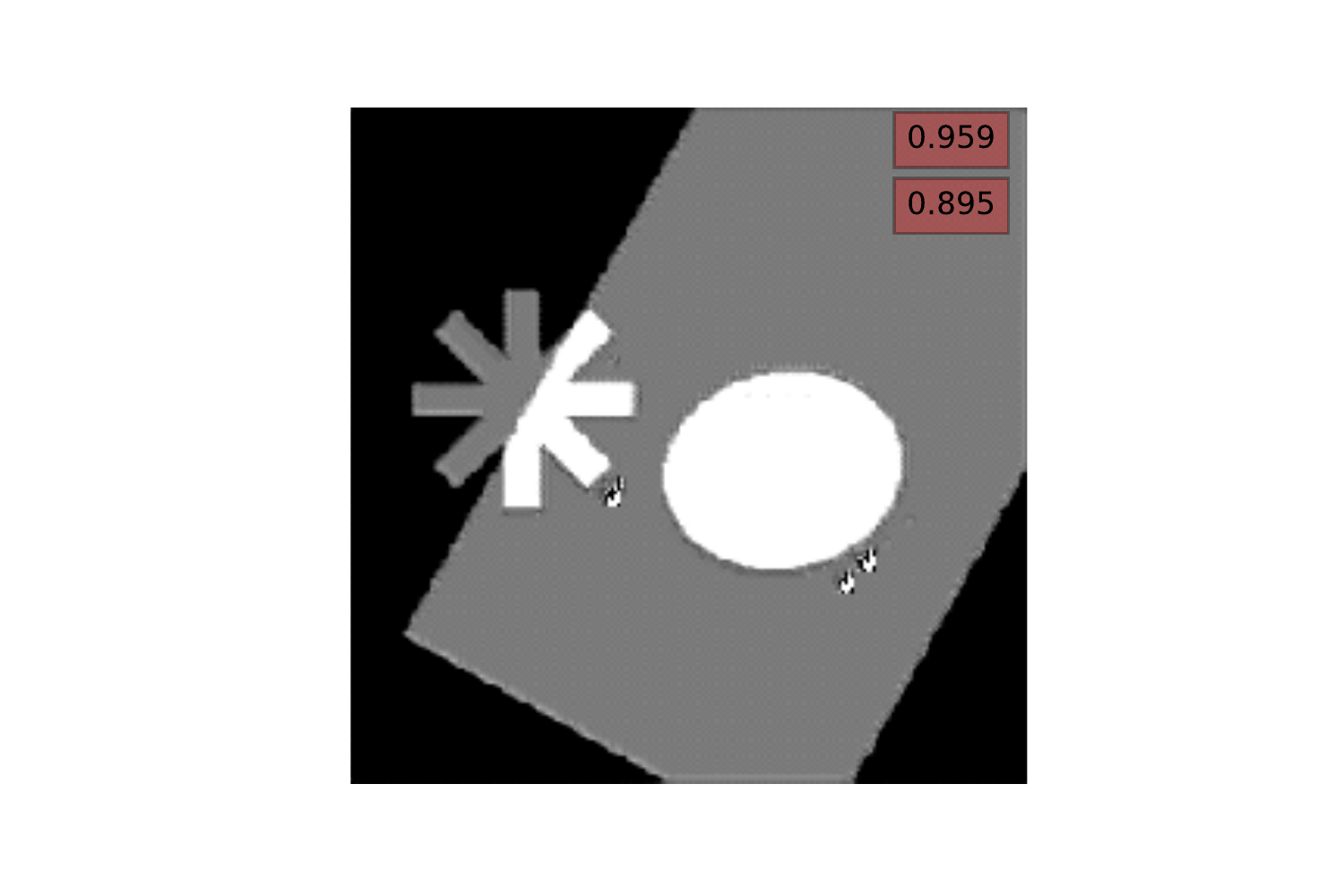}
\end{subfigure}

\caption{\textbf{Sparse recovery results.}
Top: passing the image through the tight frame U-net with bypass-connection (left) and corresponding reconstruction  after setting $\SI{85}{\percent}$ of the  coefficients to 0 (right).
Bottom: passing the image through the tight frame U-net without bypass-connection (left) and corresponding reconstruction  after setting $\SI{85}{\percent}$ of the  coefficients to 0 (right). }
\label{fig:reconstruction}
\end{figure}

\subsection{Sparse approximation results}

Each of the two tight frame U-nets  has been tested on its  ability to reconstruct the image from a sparse approximation in the encoded domain. To this end, we calculated the frame coefficients of the test image using the encoder part of the network,  and set a certain fraction
$p \in [0,1]$ of the coefficients in each channel with smallest absolute value to 0.
The decoder is then applied to the thresholded coefficients  to get a sparse approximation
of the original image. In Figure~\ref{fig:reconstruction}, example reconstructions  using all coefficients (left) and thresholded coefficients  with a value of $p = 0.85$ (right) are shown.
We observe that both tight frame U-net variants yield almost perfect  recovery when
using the original coefficients. However, as expected,  when applied to the
 thresholded coefficients,  the network without bypass-connection (bottom)
 yields significantly better results.

To quantitatively  evaluate the  reconstructed images, we compute
the structural similarity index (SSIM), the peak-signal-to-noise-ratio (PSNR) and the image distance (ID), defined by
$\operatorname{ID}_{\varepsilon}(x, \hat{x}) = \frac{1}{n } \sum_{i=1}^n \mathds{1}_{[0, \varepsilon]}(\abs{ x_i - \hat{x}_i } )$ with  $\varepsilon = 1/256$, meaning that entries
differing by less than one pixel are considered equal.
To evaluate the sparse approximation capabilities of the two models
we calculate ratios of the evaluation metrics between
the reconstructions with the thresholded  and the original  coefficients, respectively. In these evaluation   metrics, a high (close to 1) ratio indicates good performance.

\begin{figure}[htb!]
\centering
\begin{minipage}{\columnwidth}
\centering
\includegraphics[width=0.9\columnwidth]{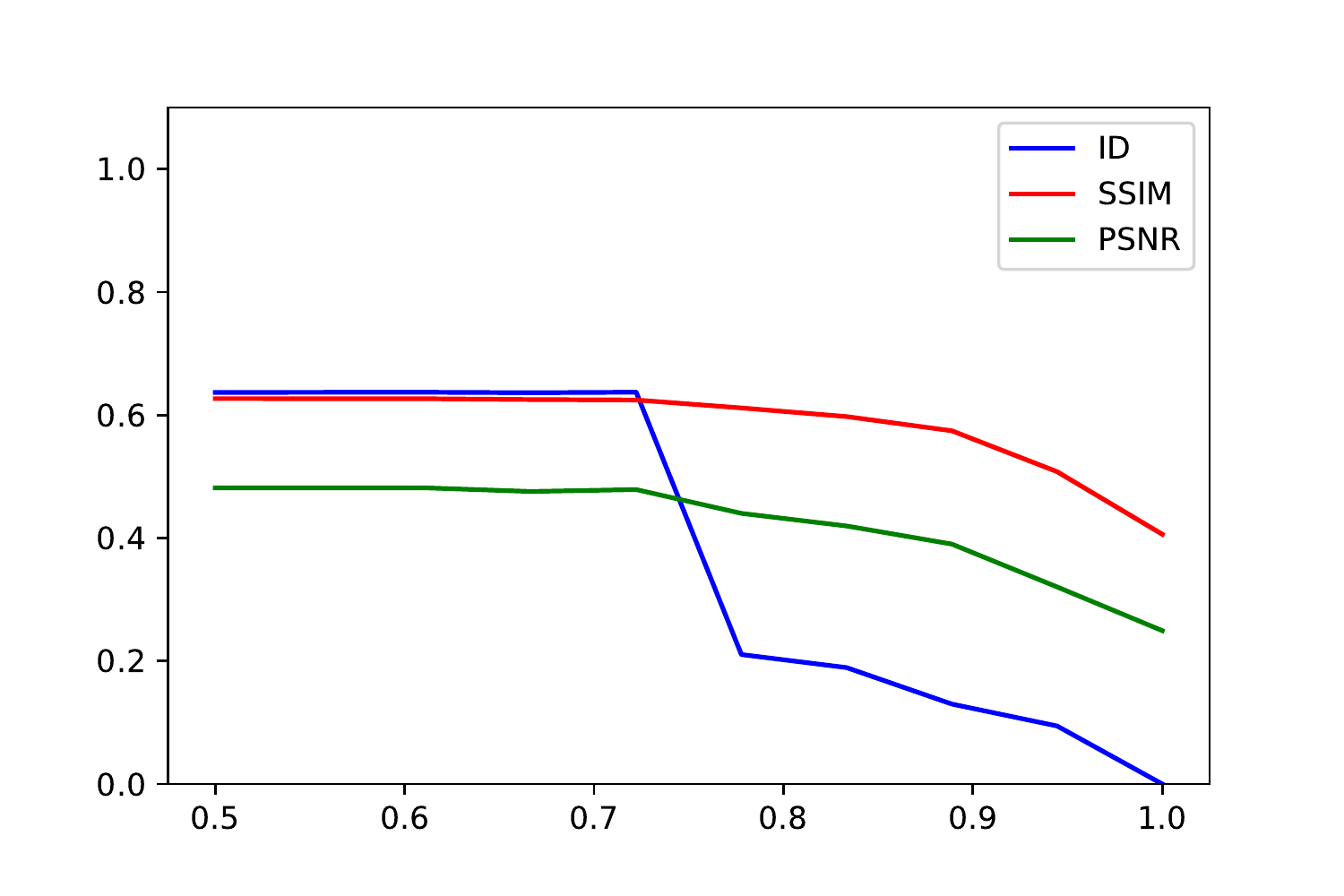}
\end{minipage} \hfill
\begin{minipage}{\columnwidth}
\centering
\includegraphics[width=0.9\columnwidth]{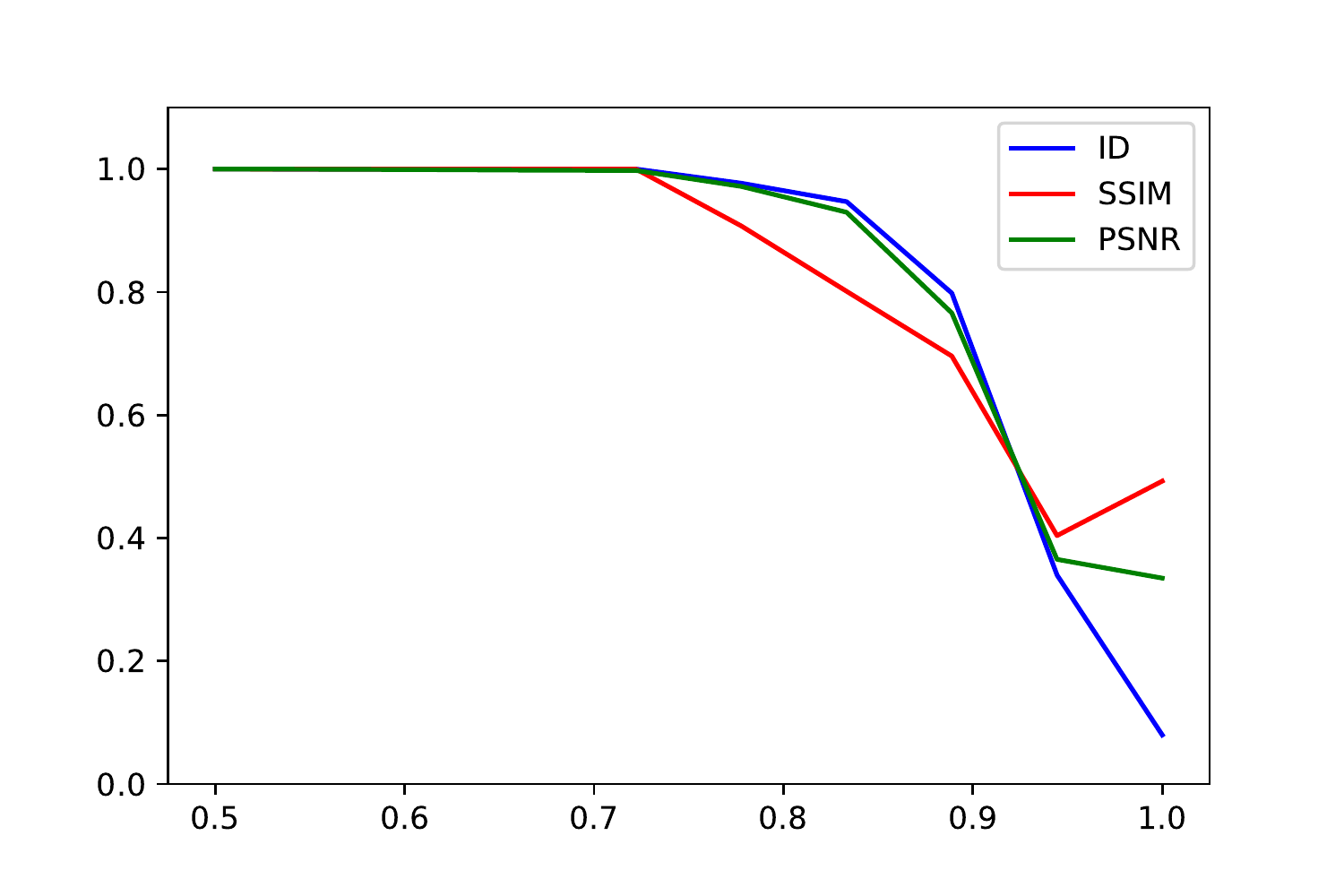}
\end{minipage}
\caption{\textbf{Ratios of ID, SSIM and PSNR scores depending on the thresholding level.} Top: Network with bypass-connection. Bottom: Network without bypass-connection. Because of the inherent sparsity of the image, we 
decided to only measure the quality of the reconstruction for a thresholding level of $p \geq 0.5$.}
\label{fig:ratiometric}
\end{figure}

\subsection{Discussion}

The reconstruction results in Figure~\ref{fig:reconstruction} show the sparse
approximation results  using the tight frame U-net with and without bypass-connection.
The network with bypass-connection is able to almost perfectly recover the
image from all frame coefficients (top left). However, when thresholding $\SI{85}{\percent}$ of the 
coefficients, this is no longer the case (top right). The bottom left image shows the image passed through the network without bypass-connection. Comparing this to the pass through the network with bypass-connection we see that the network without bypass-connection,
when using   all coefficients, performs slightly worse. However, when thresholding $\SI{85}{\percent}$ of the coefficients obtained by passing the image through the encoder part, the network
without bypass-connection significantly outperforms the one with bypass-connection.

To further investigate this issue, we sample images from the validation set and
plot the mean of the ratios of the metric scores when setting various  percentages of coefficients to zero (Figure~\ref{fig:ratiometric}). As a base for this we take the metric scores obtained by passing the images through the network. Because of the inherent sparsity of the images we chose to plot these metrics only for $p \geq 0.5$.
When comparing the two plots in Figure~\ref{fig:ratiometric} we see that the network without bypass-connection can almost maintain the metric scores up to some point at $p \simeq 0.85$, whereas the network with bypass-connection falls off right at the beginning and tends to perform worse than the network without bypass-connection.

\section{Conclusion}
\label{sec:conclusion}

In this paper we proposed  a sparse regularization strategy  using   a neural network as synthesis operator. The network is used as a nonlinear transformation between the image space and a coefficient space used  for signal representation.  In particular, we  used an encoder-decoder pair of a  tight frame U-Net   trained with
an $\ell^1$-penalty  for  signal  representation in the coefficient space.
To numerically investigate the sparse approximation capabilities,  we set some of the encoded coefficients  to zero before applying the decoder. Our numerical results suggests  that the tight frame U-net without 
bypass-connection enables sparse recovery. Actual implementation of our approach to
tomographic inverse problems and detailed comparison with other established
reconstruction methods is subject of future research. We point out that the learned part of our proposed regularization approach only depends on the class of  images to be (re-)constructed which allows us to apply the same network  to any
inverse problem  targeting  a similar class of  phantoms,
without having to retrain the network. 

\section*{Acknowledgments}
D.O. and M.H.  acknowledge support of the Austrian Science Fund (FWF), project P 30747-N32.

\end{document}